\newtheorem{theorem}{Theorem}[section]
\newtheorem{corollary}[theorem]{Corollary}
\newtheorem{proposition}[theorem]{Proposition}
\theoremstyle{definition}
\newtheorem{definition}[theorem]{Definition}
\numberwithin{equation}{section}
\newcommand\eps{\varepsilon}
\newcommand\F{\mathbb{F}}
\newcommand\R{\mathbb{R}}
\newcommand\Z{\mathbb{Z}}
\def\ord{\operatorname{ord}}
\begin{document}
\title{Upper bound for dimension of Hilbert cubes contained in the quadratic residues of $\F_p$}

\author[Alsetri]{Ali Alsetri}
\address{Department of Mathematics, University of Kentucky\\
715 Patterson Office Tower\\
Lexington, KY 40506\\
USA}
\email{alialsetri@uky.edu}

\author[Shao]{Xuancheng Shao}
\address{Department of Mathematics, University of Kentucky\\
715 Patterson Office Tower\\
Lexington, KY 40506\\
USA}
\email{xuancheng.shao@uky.edu}
\thanks{XS was supported by NSF grant DMS-1802224.}


\maketitle

\begin{abstract}
We consider the problem of bounding the dimension of Hilbert cubes in a finite field $\F_p$ that does not contain any primitive roots. We show that the dimension of such Hilbert cubes is $O_{\eps}(p^{1/8+\eps})$ for any $\eps > 0$, matching what can be deduced from the classical Burgess estimate in the special case when the Hilbert cube is an arithmetic progression. We also consider the dual problem of bounding the dimension of multiplicative Hilbert cubes avoiding an interval.
\end{abstract}

\section{Introduction}

A central theme in additive combinatorics is to study the interplay between arithmetic and multiplicative structures. Let $\F_p$ be a finite field with $p$ prime. In this paper, we investigate the distribution of primitive roots in $\F_p$ in Hilbert cubes.

\begin{definition}[Hilbert cubes]
Let $d$ be  a positive integer. A Hilbert cube $H \subset \F_p$ of dimension $d$ is a set of the form
$$ H = \mathcal{H}(a_0;a_1,\cdots,a_d) := \left\{ a_0 + n_1a_1 + \cdots + n_d a_d \colon n_1,\cdots,n_d \in \{0,1\}\right\} $$
for some $a_0,a_1,\cdots,a_d \in \F_p$, with $a_1,\cdots,a_d$ pairwise distinct.
\end{definition}

Alternatively, $H = \mathcal{H}(a_0;a_1,\cdots,a_d)$ is the collection of all subset sums of $A = \{a_1,\cdots,a_d\}$ translated by $a_0$. If all these subset sums are distinct, then $|H| = 2^d$. In the other extreme, if $A$ is a homogeneous arithmetic progression of the form $A = \{k, 2k, \cdots, dk\}$ for some positive integer $k$, then $|H| \leq d(d+1)/2+1$.

We study the quantity $F(p)$, defined to be the largest positive integer $d$, such that there exists a Hilbert cube of dimension $d$ not containing any primitive roots modulo $p$.

\begin{theorem}\label{thm:F(p)}
For any $\eps > 0$ we have $F(p) \ll_{\eps} p^{1/8+\eps}$.
\end{theorem}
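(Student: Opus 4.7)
The plan is to reduce to character sums via Vinogradov's identity and then establish a Burgess-type estimate for character sums over Hilbert cubes.

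For the reduction, the standard Vinogradov expansion of the indicator of primitive roots
\[
\mathbf{1}_{\mathrm{prim}}(x) = \frac{\varphi(p-1)}{p-1}\sum_{e \mid p-1} \frac{\mu(e)}{\varphi(e)} \sum_{\chi:\,\ord(\chi)=e} \chi(x)
\]
gives, upon summing over $h \in H$, using that no $h \in H$ is primitive, separating the principal-character contribution, and invoking $|\mu(e)|/\varphi(e) \leq 1$ together with $\tau(p-1) = p^{o(1)}$, a non-principal character $\chi$ with
\[
\Bigl|\sum_{h \in H}\chi(h)\Bigr| \gg |H|\,p^{-o(1)}.
\]

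The core task is then to prove a Burgess-type estimate
\[
\Bigl|\sum_{h \in H}\chi(h)\Bigr| \ll |H|^{1-1/r}\,p^{(r+1)/(4r^2)+\eps}
\]
for every non-principal $\chi$ and every positive integer $r$. I would attempt this by writing $H = A + B$ for two lower-dimensional Hilbert sub-cubes obtained by splitting the generators, bounding the character sum by $\sum_{a \in A}|\sum_{b \in B}\chi(a+b)|$, applying H\"older with exponent $2r$, and extending the outer summation from $A$ to all of $\F_p$. This reduces the problem to the $2r$-th moment $\sum_{a \in \F_p}|\sum_{b \in B}\chi(a+b)|^{2r}$, which expands as a sum over tuples in $B^{2r}$ of complete character sums of rational functions of degree at most $2r$ in $a$. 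Weil's theorem then provides square-root cancellation on the non-degenerate tuples, while the degenerate contribution (tuples making the rational function a perfect $\ord(\chi)$-th power) would be bounded combinatorially in terms of the additive energy of $B$.

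Combining the two steps and optimising $r$ would yield $|H| \ll p^{1/4+\eps}$. A lower bound of the shape $|H| \gg d^2$ for Hilbert cubes with pairwise distinct generators in $\F_p$, valid at least in the regime $d^2 \ll p$, would then deliver the claimed $d \ll p^{1/8+\eps}$; this should follow by adapting the classical integer argument $|H_k| \geq |H_{k-1}| + k$ to $\F_p$, treating short vanishing relations among the generators by descent to a sub-cube of comparable dimension.

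The main obstacle is the Burgess-type bound. Classical Burgess for intervals exploits translation stability in an essential way, and Hilbert cubes lack this rigidity; the cost of extending the outer sum from $A$ to $\F_p$ and the bookkeeping of subset-sum collisions $a+b = a'+b'$ during the moment computation will have to be controlled carefully to reach the claimed exponent $(r+1)/(4r^2)$.
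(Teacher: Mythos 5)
Your first step (the Vinogradov expansion of the indicator of primitive roots, yielding a single non-principal $\chi$ with $\bigl|\sum_{h\in H}\chi(h)\bigr|\gg |H|p^{-o(1)}$) is exactly what the paper does, and your last step (a lower bound $|H|\gg\min(d^2,p)$, which is available via the Dias da Silva--Hamidoune theorem on restricted sumsets) is fine in principle. The fatal gap is the core Burgess-type estimate. In the argument you sketch, after H\"older and after extending the outer sum from $A$ to all of $\F_p$, the complete moment $\sum_{a\in\F_p}\bigl|\sum_{b\in B}\chi(a+b)\bigr|^{2r}$ is, by Weil plus a diagonal count, $O_r\bigl(|B|^r p+|B|^{2r}p^{1/2}\bigr)$. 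Feeding this back through H\"older gives
$$ \sum_{a\in A}\Bigl|\sum_{b\in B}\chi(a+b)\Bigr| \ll_r |A|^{1-1/(2r)}|B|^{1/2}p^{1/(2r)}+|A|^{1-1/(2r)}|B|\,p^{1/(4r)}, $$
and the second (Weil) term beats the trivial bound $|A||B|$ only when $|A|>p^{1/2}$ --- a condition independent of $r$, so increasing $r$ does not help. But $A+B=H$ forces $|A|\leq|H|$, and in the regime you are trying to rule out $|H|\approx p^{1/4+2\eps}\ll p^{1/2}$, so the completion step yields nothing. This is the usual $\sqrt{p}$ barrier for completion arguments. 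Burgess breaks it for intervals only because an interval admits the reparametrisation $\chi(n+ab)=\chi(a)\chi(n/a+b)$, which replaces the outer sum over a short interval by a sum over a set of size roughly $|I|\cdot A$ with bounded multiplicities; Hilbert cubes carry no dilation symmetry of this kind, and no Burgess-type bound of strength $|H|^{1-1/r}p^{(r+1)/(4r^2)+\eps}$ is known for general Hilbert cubes of size near $p^{1/4}$. Indeed, if it were provable by a bare completion-plus-Weil argument, the problem would not have been open.

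The paper circumvents this by never estimating a character sum over the raw Hilbert cube. It first applies the Szemer\'edi--Vu theorem on restricted sumsets to show that a cube of dimension $d\geq p^{1/8+\eps}$ contains a \emph{proper GAP} of some rank $r'\leq 10$ and size $\gg d^{r'+1}$, and then invokes character sum estimates that genuinely hold for GAPs: Burgess for rank $1$ (threshold $p^{1/4+\eps}$), a union-of-intervals estimate for rank $2$ (threshold $p^{1/3+\eps}$), and Chang's bound for rank $\geq 3$ (threshold $p^{2/5+\eps}$) --- all of which do exploit the dilation trick. The point of the structural step is precisely that the guaranteed GAP size $d^{r'+1}$ grows with the rank at the same rate as these thresholds, so each case closes. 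If you want to salvage your outline, the missing ingredient is this structural reduction from Hilbert cubes to GAPs; the direct Burgess analogue for Hilbert cubes is not a viable substitute.
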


Previously the best known upper bound is $F(p) \ll p^{3/19+o(1)}$ from~\cite{DES17}, building on earlier works~\cite{HS99, DES13}. As noted in~\cite{DES13}, Theorem~\ref{thm:F(p)} implies as a special case the Burgess bound $g(p) \leq p^{1/4+o(1)}$ on the least primitive root $g(p)$ modulo $p$~\cite{Bur62}. This can be seen by considering $H = \mathcal{H}(0; 1, 2, \cdots, d) = \{0, 1, 2, \cdots, d(d+1)/2\}$. Hence any improvement of the exponent $1/8$ in Theorem~\ref{thm:F(p)} would also lead to an improvement of the Burgess bound.

As immediate corollaries, we get the same upper bound for the dimension of Hilbert cubes whose elements are all quadratic residues, or Hilbert cubes whose elements are all quadratic non-residues.

\begin{corollary}\label{cor:F(p)}
If $H \subset \F_p$ is a Hilbert cube of dimension $d$ whose elements are all quadratic residues modulo $p$, then $d \ll_{\eps} p^{1/8+\eps}$ for any $\eps > 0$. Similarly, If $H \subset \F_p$ is a Hilbert cube of dimension $d$ whose elements are all quadratic non-residues modulo $p$, then $d \ll_{\eps} p^{1/8+\eps}$ for any $\eps > 0$.
\end{corollary}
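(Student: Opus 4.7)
The plan is to derive both halves of the corollary as immediate consequences of Theorem \ref{thm:F(p)}. The key elementary observation I would invoke first is that every primitive root modulo an odd prime $p$ is a quadratic non-residue: such an element has multiplicative order $p-1$, which is even, so it cannot lie in the index-two subgroup of squares in $\F_p^*$. Consequently, the set of quadratic residues is disjoint from the set of primitive roots, and a Hilbert cube $H$ all of whose elements are quadratic residues automatically contains no primitive roots. Applying Theorem \ref{thm:F(p)} directly to this $H$ yields $d \ll_{\eps} p^{1/8+\eps}$, finishing the first statement.

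For the second statement, I would reduce to the first by a standard dilation trick. Fix any quadratic non-residue $c \in \F_p^*$, and suppose $H = \mathcal{H}(a_0; a_1, \ldots, a_d)$ consists entirely of quadratic non-residues. Then, since multiplication by $c$ is a bijection on $\F_p$, the set
\[
cH = \{ca_0 + n_1(ca_1) + \cdots + n_d(ca_d) \colon n_i \in \{0,1\}\} = \mathcal{H}(ca_0; ca_1, \ldots, ca_d)
\]
is again a Hilbert cube of dimension $d$; the generators $ca_1, \ldots, ca_d$ remain pairwise distinct because $c \neq 0$. Since the product of two non-residues is a residue, every element of $cH$ is a quadratic residue, and the first statement applied to $cH$ gives $d \ll_{\eps} p^{1/8+\eps}$.

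There is no genuine obstacle to overcome: the entire argument consists of two routine observations (primitive roots are non-residues; multiplication by a nonzero constant preserves the Hilbert cube structure). The only cases not covered by the reasoning above are the trivial small primes, e.g.\ $p = 2$, which can be dispensed with by absorbing them into the implicit constant.
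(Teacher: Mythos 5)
Your proof is correct and follows essentially the same route as the paper: note that primitive roots are non-residues so the first case reduces directly to Theorem~\ref{thm:F(p)}, then handle the non-residue case by dilating the cube by a fixed non-residue to land back in the first case. Your write-up just spells out a couple of details (why primitive roots are non-residues, why the dilated generators stay distinct) that the paper leaves implicit.
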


\begin{proof}
If all elements in $H$ are quadratic residues, then $H$ does not contain any primitive roots, and hence the conclusion follows from Theorem~\ref{thm:F(p)}. If all elements in $H = \mathcal{H}(a_0; a_1,\cdots,a_d)$ are quadratic non-residues, then for any fixed quadratic non-residue $g \in \F_p$, all elements in the dilated Hilbert cube $H' = \mathcal{H}(ga_0; ga_1, \cdots, ga_d)$ are all quadratic residues, and the conclusion follows from the previous case.
\end{proof}

Our proof of Theorem~\ref{thm:F(p)} follows the general strategy in~\cite{DES17}. First we locate a large generalized arithmetic progression (GAP) $P$ in the Hilbert cube $H = \mathcal{H}(a_0; a_1,\cdots, a_d)$ when $d \geq p^{1/8+\eps}$; see Proposition~\ref{prop:cube-GAP} below. This is to be expected since there are lots of collisions when forming subset sums of $\{a_1,\cdots,a_d\}$ when $d \geq p^{1/8+\eps}$, and thus $H$ should have rich additive structures. This type of phenomenon from subset sums or iterated sumsets was studied in~\cite{SV06a, SV06b}. Then we use character sum estimates to show that $P$ must contain primitive roots; see Proposition~\ref{prop:char-sum-GAP} below.

We remark that Theorem~\ref{thm:F(p)} explores the interaction between an additively defined set (Hilbert cube) and a multiplicatively defined set (primitive roots), belonging to the broader theme of sum-product phenomenon in additive combinatorics. See~\cite{DES13, DES13b, DES17} for other distributional problems involving the quadratic residues, and~\cite{DE1, DE2} for related questions in the setting of integers instead of $\mathbb{F}_p$.

We also investigate the following dual problem, where the roles of addition and multiplication are reversed. We start with the definition of multiplicative Hilbert cubes. 

\begin{definition}[Multiplicative Hilbert cubes]
Let $d$ be  a positive integer. A multiplicative Hilbert cube $H \subset \F_p$ of dimension $d$ is a set of the form
$$ H = \mathcal{H}^{\times}(a_0;a_1,\cdots,a_d) := \left\{ a_0 a_1^{n_1} \cdots a_d^{n_d} \colon n_1,\cdots,n_d \in \{0,1\}\right\} $$
for some $a_0,a_1,\cdots,a_d \in \F_p^{\times}$, with $a_1,\cdots,a_d$ pairwise distinct.
\end{definition}

\begin{theorem}\label{thm:mult}
For any $\eps > 0$, there exists $\delta > 0$ such that the following statement holds. Let $I \subset \F_p$ be an interval of length $p^{1-\delta}$, and let $H$ be a multiplicative Hilbert cube of dimension $d$ that does not intersect $I$. Then $d \ll_{\eps} p^{\eps}$.
\end{theorem}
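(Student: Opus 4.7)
The plan is to carry out the multiplicative dual of the strategy for Theorem~\ref{thm:F(p)}. Fix a primitive root $g \in \F_p^\times$ and let $\log_g \colon \F_p^\times \to \Z/(p-1)\Z$ denote the discrete logarithm. Under $\log_g$, the multiplicative Hilbert cube $H = \mathcal{H}^\times(a_0;a_1,\ldots,a_d)$ corresponds to an additive Hilbert cube $L := \log_g H$ in $\Z/(p-1)\Z$ of the same dimension $d$. Assume for contradiction that $d \geq C_\eps p^\eps$ for a sufficiently large constant $C_\eps$. Applying a variant of Proposition~\ref{prop:cube-GAP} in $\Z/(p-1)\Z$ should yield a generalized arithmetic progression $Q \subseteq L$ of rank $r = O_\eps(1)$ and size $|Q| \geq p^\eta$ for some $\eta = \eta(\eps) > 0$. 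Exponentiating, one obtains a multiplicative GAP
$$ P := g^{Q} = \left\{ b_0 b_1^{m_1} \cdots b_r^{m_r} : 0 \leq m_i < N_i \right\} \subseteq H $$
of bounded rank and with $|P| = |Q| \geq p^\eta$.

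Since $H \cap I = \emptyset$, also $P \cap I = \emptyset$, and the goal is to derive a contradiction by showing $|P \cap I| > 0$. By Fourier analysis in $\F_p$,
$$ |P \cap I| = \frac{|P||I|}{p} + \frac{1}{p} \sum_{t \neq 0} \widehat{\mathbf{1}_P}(t) \overline{\widehat{\mathbf{1}_I}(t)}, $$
where $\widehat{f}(t) := \sum_x f(x) e^{2\pi i tx/p}$. The main term is at least $p^{\eta-\delta}$, positive whenever $\delta < \eta$. Using the standard estimate $\sum_{t} |\widehat{\mathbf{1}_I}(t)| \ll p \log p$, the error term is dominated by $\log p \cdot \max_{t \neq 0} |\widehat{\mathbf{1}_P}(t)|$. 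It therefore suffices to establish an exponential sum bound of the shape
$$ \max_{t \neq 0} \left| \sum_{x \in P} e^{2 \pi i t x / p} \right| \ll |P|^{1-\kappa} $$
for some $\kappa = \kappa(r,\eta) > 0$ depending only on the rank of $P$ and on $\eta$. Choosing $\delta < \eta \kappa$ then balances the main and error terms and forces the desired contradiction. Such bounds follow from the Bourgain--Glibichuk--Konyagin sum-product machinery, applied to multiplicative GAPs of bounded rank inside $\F_p^\times$.

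The principal obstacle is securing the exponential sum bound above. Nontrivial estimates for additive characters along a multiplicative GAP break down if $P$ happens to sit inside a small multiplicative subgroup; equivalently, the generators $b_1,\ldots,b_r$ must together span a subgroup of $\F_p^\times$ that is not much smaller than $|P|$ itself. Since the $b_i$ arise indirectly from Proposition~\ref{prop:cube-GAP} we have little a priori control over their multiplicative orders, and circumventing this will likely require either (i) refining the cube-to-GAP extraction so that the generators are in sufficiently general position in $\F_p^\times$, or (ii) invoking a uniform version of the exponential sum estimate for multiplicative GAPs of bounded rank and polynomial size that is robust to such degeneracies. The latter is essentially available in the regime $|P| \geq p^\eta$ with $\eta>0$ fixed, which matches the size of the GAP we extract, so the expected bottleneck is bookkeeping rather than a new tool.
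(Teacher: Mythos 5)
Your route is genuinely different from the paper's, and it founders on the exponential sum estimate that you yourself flag as the bottleneck. After taking discrete logarithms and extracting a GAP, what you end up with is a multiplicative GAP $P$ of bounded rank and size $|P| \approx p^{\eta}$ with $\eta$ comparable to $\eps$ (in the worst case of Theorem~\ref{thm:SV} the rank is $1$ and $|P| \approx d^2 \approx p^{2\eps}$), i.e.\ a product of geometric progressions whose lengths multiply to only $p^{\eta} \ll p$. No result in the Bourgain--Glibichuk--Konyagin circle gives $\max_{t \neq 0} \left|\sum_{x \in P} e_p(tx)\right| \ll |P|^{1-\kappa}$ in this regime: the multilinear estimates (including \cite[Theorem A]{Bourgain}) require the \emph{product} of the source-set sizes to exceed a fixed power of $p$ larger than $p$, and the complete-sum estimates for multiplicative subgroups do not transfer to incomplete geometric progressions of length $p^{\eta}$ with $\eta$ small. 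Already in the rank-one case, no power-saving bound is known for $\sum_{m < p^{0.01}} e_p(t g^m)$ with $g$ a primitive root; this is a well-known open problem. So the missing step is a missing tool, not bookkeeping. There are two further unverified points: Theorem~\ref{thm:SV} is stated in $\F_p$, and its use in $\Z/(p-1)\Z$ (where the image of $A$ under the discrete logarithm may sit inside a proper subgroup) needs justification; and even granting a GAP, you have no control on the multiplicative orders of the resulting generators $b_i$, so the degeneracies you mention are real and unaddressed.

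The paper avoids all of this by never leaving the multilinear regime. It partitions the $d \geq p^{\eps}$ generators $a_1,\dots,a_d$ into $r = \lceil 2\eps^{-1}\rceil$ blocks $A_1,\dots,A_r$, each of size $\gg_{\eps} p^{\eps}$, so that $\prod_i |A_i| \gg p^{2}$; the products $a_0 x_1 \cdots x_r$ with $x_i \in A_i$ all lie in $H$, hence avoid $I$. Corollary~\ref{erdos-turan-cor} then produces a frequency $k \leq 10 p^{\delta}$ with $\left|\sum_{x_1,\dots,x_r} e_p(k a_0 x_1\cdots x_r)\right| \gg p^{-2\delta} \prod_i |A_i|$, while Bourgain's multilinear bound \cite[Theorem A]{Bourgain} --- applicable precisely because the sum runs over the full product $A_1 \times \cdots \times A_r$ counted with multiplicity, with each $|A_i| \gg p^{\eps}$ and $\prod_i|A_i| \gg p^2$ --- gives a power saving $p^{-c}$. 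Taking $\delta = c/4$ yields the contradiction. The lesson is to exploit the $p^{\eps}$ generators directly as $r$ independent large sources rather than first compressing the cube into a small structured subset, for which no suitable exponential sum technology exists.
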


Note that if $H \subset \F_p^{\times}$ is a multiplicative subgroup, then it is a multiplicative Hilbert cube of dimension $|H|$. Our proof of Theorem~\ref{thm:mult} uses Bourgain's multilinear exponential sum estimate~\cite{Bourgain} and the Erd\"{o}s-Tur\'{a}n inequality on equidistribution.

\subsection*{Notation}

We use $X \ll Y$, $X = O(Y)$, or $Y \gg X$ to denote the estimate $|X| \leq CY$ for some constant $C$.  If we wish to permit this constant to depend on one or more parameters we shall indicate this by appropriate subscripts, thus for instance $O_{\eps}(Y)$ denotes a quantity bounded in magnitude by $C_{\eps} Y$ for some quantity $C_{\eps}$ depending only on $\eps$. 

If $x$ is a real number, we write $e(x) \coloneqq e^{2\pi i x}$. If $n$ is an element in a finite field $F_p$, we write $e_p(n) \coloneqq e(n/p) = e^{2\pi in/p}$.

\section{Preliminaries}

\subsection{GAPs and sumsets}

For a subset $A \subset \F_p$ and a positive integer $\ell$, we define the $\ell$-fold sumset $\ell A$ to be the set of all sums $a_1+\cdots +a_{\ell}$ with each $a_i \in A$, and define the restricted $\ell$-fold sumset $\ell^*A$ to be the set of all sums $a_1+\cdots+a_{\ell}$ with distinct $a_1,\cdots,a_{\ell} \in A$. We denote by $S_A$ the collection of all elements which can be represented as a sum of distinct members of $A$. Thus $S_A = \cup_{1 \leq \ell \leq |A|} \ell^*A$.

\begin{definition}
A Generalized Arithmetic Progression (GAP) of rank $r$ is a subset $P \subset \F_p$ of the form 
$$ P = \{a_0 + n_1a_1 + \dots + n_ra_r : 0 \leq n_i < N_i\} $$ 
for some positive integers $N_1,\cdots,N_r$ and some $a_0, a_1,\cdots, a_r \in \F_p$.
It is said to be proper if $|P| = \prod_{i=1}^rN_i$.
\end{definition}

The following theorem of Szemeredi and Vu~\cite[Theorem 10.5]{SV06b} allows us to locate a large GAP inside an interated sumset $\ell^*A$.

\begin{theorem}\label{thm:SV}
For any fixed positive integer $r$ there are positive constants $C$ and $c$ depending on $r$ such that the following holds. Let $p$ be a prime, let $A \subset \F_p$ be a subset, and let $\ell \leq |A|/2$ be a positive integer such that $\ell^{r+1}|A| \geq Cp$. Then $\ell^*A$ either contains all of $\F_p$ or contains a proper GAP of rank $r'$ and size at least $c\ell^{r'}|A|$, for some integer $1 \leq r' \leq r$. 
\end{theorem}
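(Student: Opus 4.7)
The plan is to prove Theorem~\ref{thm:SV} by induction on the rank $r$, following the general strategy in Freiman-type structural results on iterated sumsets. The hypothesis $\ell^{r+1}|A| \geq Cp$ is calibrated so that we can afford to lose one factor of $\ell$ at each step of the induction while still having enough room to invoke the previous case.

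\textbf{Base case ($r=1$).} Here I want to show that if $\ell^2|A| \geq Cp$, then either $\ell^*A = \F_p$ or $\ell^*A$ contains a genuine arithmetic progression of length $\geq c\ell|A|$. My approach is to apply Cauchy--Schwarz to the representation function $r_{A-A}(d) = |\{(a,a') \in A^2 : a-a' = d\}|$, which yields a nonzero $d \in \F_p$ with $r_{A-A}(d) \gg |A|^2/p$. Fix such a $d$ and a base sum $s_0 = b_1 + \cdots + b_\ell \in \ell^*A$. For each $k$ in a range, I would try to realize $s_0 + kd$ as a distinct element of $\ell^*A$ by performing $k$ disjoint swaps $b_i \mapsto b_i + d$, drawing from the collection of high-multiplicity pairs. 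The length of the AP one gets is essentially the number of such disjoint swaps available, which under the hypothesis $\ell^2|A| \geq Cp$ works out to $\gg \ell|A|$.

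\textbf{Inductive step.} Assume the theorem in rank $r-1$. Split $\ell = \ell_1 + \ell_2$ with $\ell_1 \asymp \ell$ and $\ell_2 \asymp \ell$, and apply the inductive hypothesis to $\ell_1^*A$ to obtain a proper GAP $P_0 \subset \ell_1^*A$ of rank $r'' \leq r-1$ and size $\gg \ell_1^{r''}|A|$ (assuming we are not already covering $\F_p$). Then $\ell^*A \supset P_0 + \ell_2^*(A \setminus A_{P_0})$, where $A_{P_0}$ denotes the small set of generators involved in $P_0$. If $r'' = r-1$ and the Minkowski sum $P_0 + \{0,a,2a,\ldots,Na\}$ is proper of size $\gg \ell^r|A|$ for some new generator $a$ and $N \asymp \ell$, we are done. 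Otherwise the failure of properness forces many collisions, which translates into $P_0$ lying in a proper coset structure and allows us to produce either a larger GAP of the same rank or a GAP of higher rank via a covering argument.

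\textbf{Main obstacle.} The hardest point is maintaining \emph{properness} when extending $P_0$ by a new generator. A careless choice of $a \in A \setminus A_{P_0}$ may be commensurable with the existing sides of $P_0$ modulo $p$, collapsing the product $\prod N_i$ by a large factor. The right way to handle this is likely an averaging argument: show that only a small fraction of candidates $a$ give heavy collisions, so a positive density of choices yields a proper extension. Calibrating the loss at each of the $r$ inductive steps is what forces the exponent $r+1$ in the hypothesis $\ell^{r+1}|A| \geq Cp$, and the dichotomy ``either $\ell^*A = \F_p$ or we get a proper GAP'' is needed because once the covering argument saturates $\F_p$ there is no room left to grow.
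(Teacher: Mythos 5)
First, a point of orientation: the paper does not prove this statement at all --- it is quoted verbatim as an external black box, namely \cite[Theorem~10.5]{SV06b} of Szemer\'edi and Vu, so there is no internal proof to compare against. What you have written is a sketch of a proof of a deep theorem, and it has a genuine gap already in the base case. Your rank-one argument selects a single popular difference $d$ with $r_{A-A}(d) \gg |A|^2/p$ and then shifts a fixed sum $s_0 = b_1 + \cdots + b_\ell$ by performing disjoint swaps $b_i \mapsto b_i + d$. But a sum in $\ell^*A$ has only $\ell$ summands, so no matter how many disjoint high-multiplicity pairs exist in $A$, you can perform at most $\ell$ swaps on any one sum; the progression $s_0, s_0+d, \ldots$ you obtain therefore has length at most $\ell+1$, not $\gg \ell|A|$. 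The extra factor of $|A|$ is precisely the hard content of the theorem: for instance with $|A| \asymp p^{1/2}$ and $\ell \asymp p^{1/4}$ the theorem demands an arithmetic progression of length $\asymp p^{3/4}$ inside $\ell^*A$, while your swapping mechanism produces one of length $\asymp p^{1/4}$. To get length $\ell|A|$ with a \emph{single} common difference one essentially needs a large subset of $A$ itself to lie in a short arithmetic progression (so that the summands, not just the shifts, contribute), and establishing the dichotomy ``either such structure exists or the sumsets grow/fill up'' is where Szemer\'edi and Vu invoke Freiman-type inverse theorems, covering lemmas, and a delicate iterative tiling argument.

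The inductive step has the same character: the sentence ``the failure of properness forces many collisions, which translates into $P_0$ lying in a proper coset structure and allows us to produce either a larger GAP of the same rank or a GAP of higher rank via a covering argument'' is a description of what must be proved, not a proof. Controlling properness when adjoining a new generator, and converting collision statistics into a rank increase without losing more than a bounded power of $\ell$, occupies the bulk of the Szemer\'edi--Vu paper. Since the present paper's contribution lies entirely downstream of Theorem~\ref{thm:SV} (in Propositions~\ref{prop:cube-GAP} and~\ref{prop:char-sum-GAP}), the appropriate treatment here is to cite the result rather than attempt to reprove it.
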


\subsection{Character sum estimates}

We collect character sum estimates over GAPs which will be used in the proof of Theorem~\ref{thm:F(p)}. These can be viewed as generalizations of the classical Burgess estimate for character sums. Chang's estimate~\cite{Cha08} gives non-trivial bound for character sums over GAPs of size $p^{2/5+\eps}$.

\begin{theorem}\label{Chang}
Let $p$ be prime, and let $P \subset \F_p$ be a proper GAP of rank $r$ with $|P| > p^{\frac{2}{5} + \eps}$ for some $\eps > 0$. Let $\chi\pmod{p}$ be a non-trivial Dirichlet character. Then
$$ \sum_{n \in P}\chi(n) \ll_{\eps,r} p^{-c}|P| $$ 
for some constant $c = c(\eps, r) > 0$.
\end{theorem}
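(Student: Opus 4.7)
The plan is to adapt the classical Burgess shift-and-average method from intervals to GAPs. Burgess' argument for an interval $I$ rests on two features: $I$ is essentially invariant under small translations, and there is a multiplicative change of variables that converts its character sum into a Weil moment. A proper GAP $P = \{a_0 + n_1 a_1 + \cdots + n_r a_r : 0 \leq n_i < N_i\}$ has a natural analogue of the first feature: it is almost invariant under shifts $h = h_1 a_1 + \cdots + h_r a_r$ with $|h_i|$ small compared with $N_i$, and this is the starting point.

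First I would quantify this shift invariance. Let $P_0 = \{h_1 a_1 + \cdots + h_r a_r : 0 \leq h_i < \eta N_i\}$ for a small parameter $\eta$ to be optimised, so that $|P \triangle (P+h)| \ll r\eta |P|$ for every $h \in P_0$. Averaging gives
$$ S := \sum_{n \in P} \chi(n) = \frac{1}{|P_0|} \sum_{h \in P_0} \sum_{n \in P} \chi(n+h) + O(r \eta |P|). $$
Following Burgess, I would introduce an auxiliary variable $a$ ranging over an interval $A = [1,A_0]$ in $\F_p^{\times}$ and replace the shift $h$ by $ah$; this is legitimate as long as $A_0$ and $\eta$ are coupled so that $P$ is still approximately invariant under shifts in $A \cdot P_0$. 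Using the identity $\chi(n+ah) = \chi(a)\chi(n/a + h)$ (valid for $a \neq 0$) and swapping the orders of summation,
$$ A_0 |P_0| \cdot S = \sum_{a \in A} \chi(a) \sum_{m \in \F_p} \left( \sum_{h \in P_0} \chi(m+h) \right) \mathbf{1}_P(am) + \mathrm{error}. $$
A Hölder inequality in $m$ with even exponent $2k$ reduces matters to estimating the Weil-type moment
$$ W := \sum_{m \in \F_p} \left| \sum_{h \in P_0} \chi(m+h) \right|^{2k}, $$
which expands into a sum over $2k$-tuples in $P_0$ of complete character sums. These are controlled by the Weil bound: generic tuples contribute $O_k(p^{1/2} |P_0|^{2k})$, while diagonal tuples (where the $h_i$'s pair up) contribute $O_k(p |P_0|^k)$. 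Balancing $\eta$, $A_0$ and $k$ against each other produces the threshold $|P| > p^{2/5+\eps}$.

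The main obstacle is the higher-rank case $r > 1$. The diagonal contribution to $W$ is inflated because additive equations such as $h_1 + \cdots + h_k = h_{k+1} + \cdots + h_{2k}$ have many more solutions in a proper GAP of rank $r$ than in a random set of the same cardinality, and this is exactly what forces the constant $c(\eps,r)$ to depend on $r$. A cleaner implementation I would try first is to slice $P$ along one of its progression directions so that each slice is a rank-$(r-1)$ GAP and then induct on $r$; the core Burgess--Weil input is then only applied in the rank-$1$ case, where the classical threshold $p^{1/4+\eps}$ is available, and the extra room up to $p^{2/5+\eps}$ is absorbed by a further Hölder inequality in the remaining coordinates.
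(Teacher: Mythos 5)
This theorem is not proved in the paper at all: it is Chang's result, quoted verbatim from~\cite{Cha08}, so your proposal has to be judged as an attempt to reprove that theorem. The Burgess shift-and-average skeleton you set up is the right frame, but the proposal omits the step that is the actual content of Chang's work. After the substitution $\chi(n+ah)=\chi(a)\chi(na^{-1}+h)$ and the H\"older inequality in $m$, one is left not only with the Weil moment $W$ but also with a factor governed by $\sum_m \nu(m)^2$, where $\nu(m)=\#\{(n,a)\in P\times A: na^{-1}=m\}$; equivalently one must count solutions of $n_1a_2=n_2a_1$ with $n_i\in P$, $a_i\in A$. For an interval this is an elementary divisor-function estimate, but for a proper GAP of rank $r\geq 2$ it is the whole difficulty: one must show that a set with small additive doubling has essentially minimal multiplicative energy, and that is exactly what Chang proves (via Freiman-type structure and her bounds on multiplicative collisions in a GAP, ultimately resting on the subspace theorem). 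Your sketch hides this inside ``$+\ \mathrm{error}$'' and the unexamined H\"older factor, so the argument does not close. Relatedly, your description of the diagonal of $W$ is off: the degenerate tuples for the Weil bound are those where the multiset $\{h_1,\dots,h_k\}$ coincides with $\{h_{k+1},\dots,h_{2k}\}$ (so that the relevant polynomial is a perfect $\ord(\chi)$-th power), not solutions of the additive equation $h_1+\cdots+h_k=h_{k+1}+\cdots+h_{2k}$; their number is $O_k(|P_0|^k)$ regardless of the additive structure of $P_0$, so this is not where the rank dependence enters.

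The fallback you propose, slicing $P$ into rank-$(r-1)$ GAPs and inducting with the Burgess input only at rank $1$, cannot reach the stated threshold. If the side lengths $N_i$ are balanced, each slice has size about $|P|^{(r-1)/r}$, and requiring the slices to exceed the lower-rank threshold forces $|P|$ to be much larger than $p^{2/5}$; already for $r=2$ naive slicing needs $|P|>p^{1/2}$, which is weaker even than the rank-$1$ Burgess range, and the paper's improvement to $p^{1/3}$ for rank $2$ (Corollary~\ref{Chang2}) requires the genuinely different input of Theorem~\ref{Shao} on unions of intervals rather than slice-by-slice estimation. A further H\"older inequality in the remaining coordinates does not recover the loss, because the obstruction is that an individual slice can simply be too short to exhibit any cancellation.
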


For GAPs of rank $r=2$, one can improve the threshold $p^{2/5+\eps}$ in Theorem~\ref{Chang} to $p^{1/3+\eps}$, using the following character sum estimates over unions of intervals~\cite[Corollary 1.2]{Sha15} (which builds on works in~\cite{Heath-Brown}).

\begin{theorem}\label{Shao}
Let $p$ be prime and $\eps > 0$. Let $\chi\pmod{p}$ be a non-trivial Dirichlet character. Let $A \subset [1,p]$ be a union of $s$ disjoint intervals $I_1,\dots,I_s$ each of which has length at least $p^{\eps}$. Suppose that $|A|s^{-\frac{1}{2}} > p^{\frac{1}{4} + \eps}$. Then  
$$\sum_{n \in A}\chi(n) \ll_{\eps} p^{-c} |A| $$
for some constant $c = c(\eps) > 0$.
\end{theorem}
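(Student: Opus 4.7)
My plan is to adapt the classical Burgess argument, with special care for the boundary error introduced by the union-of-intervals structure. \textbf{Step 1 (Shift identity).} Let $S = \sum_{n \in A}\chi(n)$ and $N = |A|$. For any integer shift $h$ with $|h| \leq p^{\eps/2}$, which is small compared to the minimum interval length $p^\eps$, shifting within each of the $s$ intervals $I_j$ preserves all but $O(|h|)$ boundary elements, so
\[
S = \sum_{n \in A} \chi(n+h) + O(s|h|).
\]
The $s|h|$ error (rather than just $|h|$ as in a single interval) is the only place the union-of-intervals structure enters.

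\textbf{Step 2 (Burgess machinery).} I then take multiplicative shifts $h = ab$ with $a$ running over primes in $(A', 2A']$ and $b \in \{1, \ldots, B'\}$, subject to $A'B' \leq p^{\eps/2}$. Using $\chi(n+ab) = \chi(a)\chi(n\bar{a}+b)$ and summing over $(a,b)\in \mathcal{A}\times \mathcal{B}$, the shift identity becomes
\[
|\mathcal{A}||\mathcal{B}|\,S = \sum_{a \in \mathcal{A}} \chi(a) \sum_u \nu_a(u) \chi(u) + O\big(sA'B' \cdot |\mathcal{A}||\mathcal{B}|\big),
\]
where $\nu_a(u) = \#\{(n,b) \in A \times \mathcal{B} : n\bar{a} + b \equiv u \pmod{p}\}$. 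Applying H\"older's inequality with exponent $2r$ to the $a$-sum, expanding the $2r$-th moment, and bounding the resulting complete multiplicative character sums by Weil's inequality $\sum_{x \in \F_p} \chi\!\big(\prod_i (x+c_i)/\prod_j(x+d_j)\big) \ll_r p^{1/2}$ for non-degenerate rational functions, yields a Burgess-type bound of the schematic form $|S| \ll_{r,\eps} p^{o(1)} \cdot N \cdot M(A',B',p) + sA'B'$, where $M$ is the classical Burgess main-term function.

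\textbf{Step 3 (Optimization and main obstacle).} Imposing $sA'B' \lesssim Np^{-c}$ so as to absorb the boundary error (so $A'B' \lesssim N/(sp^c)$), then optimizing $A'$ and $B'$, and taking the moment exponent $r$ large in terms of $\eps$, produces $|S| \ll Np^{-c'}$ whenever $Ns^{-1/2} > p^{1/4+\eps}$. The principal obstacle is exactly the $s|h|$ boundary error: in the classical single-interval Burgess setting ($s=1$) the analogous argument yields cancellation for $N > p^{1/4+\eps}$; here the additional factor of $s$ shrinks the allowed shift box, which when fed through the Cauchy--Schwarz step of the H\"older argument produces exactly the $s^{1/2}$ loss in the final threshold. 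A secondary technical point is the standard algebraic bookkeeping to ensure non-degeneracy of the rational functions appearing in the Weil estimate, handled as in Burgess' original paper.
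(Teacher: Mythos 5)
First, a point of comparison: the paper does not prove this statement at all --- it is quoted as an external result from~\cite[Corollary 1.2]{Sha15}, which in turn builds on Heath-Brown's work~\cite{Heath-Brown} on Burgess's bounds over unions of intervals. So there is no internal proof to measure you against, and your outline must stand on its own. Its skeleton is the right one (Burgess amplification by shifts $h=ab$, H\"older with exponent $2r$, Weil for the complete sums), and the boundary error $O(s|h|)$ per shift is accounted for correctly.

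However, there is a genuine gap. You assert that the $s|h|$ boundary term is ``the only place the union-of-intervals structure enters'' and that everything else reduces to ``the classical Burgess main-term function.'' That is not so. After H\"older, the Burgess method requires a mean-value estimate for the weights, i.e.\ a bound on $\sum_u \nu(u)^2$, which counts quadruples $(n_1,n_2,a_1,a_2)$ with $n_i \in A$, $a_i \in \mathcal{A}$ and $n_1 a_2 \equiv n_2 a_1 \pmod p$. For a single interval $(M,M+N]$ one writes $n_i = M+m_i$, the congruence forces $m_1a_2 - m_2a_1$ into a short window about $-M(a_2-a_1) \bmod p$, and an elementary divisor count gives essentially the diagonal bound $N|\mathcal{A}|p^{o(1)}$ under the usual parameter constraints. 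For a union of $s$ intervals with arbitrary left endpoints $M_1,\dots,M_s$ the congruence becomes $M_{j_1}a_2 - M_{j_2}a_1 \equiv m_2a_1 - m_1a_2 \pmod p$, and nothing prevents the residues $M_{j_1}a_2 - M_{j_2}a_1$ from landing in the window $[-O(NA'),O(NA')]$ for a large proportion of the $s^2|\mathcal{A}|^2$ choices of $(j_1,j_2,a_1,a_2)$; the off-diagonal contribution can then dominate the diagonal. Controlling this count --- via Cauchy--Schwarz over the interval indices and a lattice-point argument --- is the main technical content of~\cite{Heath-Brown} and~\cite{Sha15}, and it is the true source of the $s^{1/2}$ in the hypothesis $|A|s^{-1/2} > p^{1/4+\eps}$. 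Your bookkeeping, which derives the $s^{1/2}$ threshold solely from the constraint $sA'B' \ll |A|p^{-c}$, happens to land on the right exponent, but only because it silently assumes the single-interval mean-value bound, which fails for adversarially placed intervals. As written, the argument does not close at this step.
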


\begin{corollary}\label{Chang2}
Let $p$ be prime, and let $P \subset \F_p$ be a proper GAP of rank $2$ with $|P| > p^{\frac{1}{3} + \eps}$ for some $\eps > 0$. Let $\chi\pmod{p}$ be a non-trivial Dirichlet character. Then
$$ \sum_{n \in P}\chi(n) \ll_{\eps} p^{-c}|P| $$ 
for some constant $c = c(\eps) > 0$.
\end{corollary}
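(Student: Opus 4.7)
The plan is to view a rank-$2$ GAP as a union of arithmetic progressions of common difference $a_1$, reduce (by multiplicativity of $\chi$) to common difference $1$, and then apply Theorem~\ref{Shao} to the resulting union of short intervals.

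Write $P = \{a_0 + n_1 a_1 + n_2 a_2 : 0 \le n_i < N_i\}$, so $|P| = N_1 N_2 > p^{1/3+\eps}$. By symmetry of the roles of $a_1$ and $a_2$, we may assume $N_1 \ge N_2$. Since $\chi$ is completely multiplicative,
$$\sum_{n \in P} \chi(n) = \chi(a_1) \sum_{m \in a_1^{-1} P} \chi(m),$$
and $a_1^{-1} P = \{b_0 + n_1 + n_2 b : 0 \le n_1 < N_1,\ 0 \le n_2 < N_2\}$, where $b_0 = a_1^{-1} a_0$ and $b = a_1^{-1} a_2$. For each fixed $n_2$ the inner set is an arithmetic progression in $\F_p$ of step $1$ and length $N_1$; lifted to $[1,p]$ this is either a single interval of length $N_1$ or a union of two intervals whose lengths sum to $N_1$.

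Varying $n_2$, the set $a_1^{-1} P$ becomes a union of at most $s \le 2N_2$ intervals in $[1,p]$, and these intervals are pairwise disjoint since $P$ is proper. Short intervals (say of length below $p^{\eps/10}$) can be discarded with total error at most $2N_2 \cdot p^{\eps/10}$; since $N_1 \ge (N_1 N_2)^{1/2} > p^{1/6+\eps/2}$, this error is at most $p^{-1/6+O(\eps)} |P|$, which is negligible. What remains is a set $A$ that is a union of $s \le 2N_2$ disjoint intervals each of length $\ge p^{\eps/10}$ and with $|A| \ge |P|/2$. The key computation is
$$|A|\, s^{-1/2} \;\gg\; N_1 N_2^{1/2} \;\ge\; (N_1 N_2)^{3/4} \;=\; |P|^{3/4} \;>\; p^{1/4 + 3\eps/4},$$
where the second inequality uses $N_1 \ge N_2$ (hence $N_1^{1/2} \ge (N_1 N_2)^{1/4}$). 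Applying Theorem~\ref{Shao} with exponent parameter $\min(\eps/10, 3\eps/4)$ yields $\sum_{m \in A} \chi(m) \ll_\eps p^{-c} |A|$, and combining with the discarded contribution gives the claimed bound.

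I do not expect a serious obstacle: the proof is essentially a bookkeeping exercise built around the numerical inequality $N_1 N_2^{1/2} \ge |P|^{3/4}$. The only minor nuisances are the wraparound when lifting $\F_p$ to $[1,p]$ and the lower-bound condition on the interval lengths in Theorem~\ref{Shao}, both of which are handled by discarding short segments at a polynomial-in-$p$ loss that is absorbed by the savings from Theorem~\ref{Shao}.
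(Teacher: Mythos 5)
Your proof is correct and is essentially the paper's own argument: dilate by $a_1^{-1}$ to turn $P$ into a union of step-$1$ progressions, use $N_1\ge N_2$ to get the key inequality $N_1N_2^{1/2}\ge(N_1N_2)^{3/4}=|P|^{3/4}>p^{1/4+3\eps/4}$, and apply Theorem~\ref{Shao}. In fact you are somewhat more careful than the paper, which simply asserts that the dilated GAP is a disjoint union of $N_2$ intervals of length $N_1$, whereas you explicitly handle the wraparound in $[1,p]$ and the minimum-length hypothesis of Theorem~\ref{Shao} by discarding short segments at a negligible cost.
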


\begin{proof}
We may write
$$ P = \{a_0 + n_1a_1 + n_2a_2 \colon 0 \leq n_1 < N_1, 0 \leq n_2 < N_2\} $$
for some positive integers $N_1, N_2$ and some $a_0,a_1,a_2 \in \F_p$.
Without loss of generality, we may assume that $N_1 \geq N_2$. We have $a_1 \neq 0$ since $P$ is proper.
Let
$$ P' = a_1^{-1}P := \{a_1^{-1}a_0 + n_1 + n_2 (a_1^{-1}a_2) \colon  0 \leq n_1 < N_1, 0 \leq n_2 < N_2\}. $$
Since $P$ is proper, $P'$ is also proper and thus it is a disjoint union of $N_2$ intervals of length $N_1$. We have
$$ |P'| N_2^{-1/2} = N_1N_2^{1/2} \geq (N_1N_2)^{3/4} = |P|^{3/4} > p^{1/4+\eps/2}.  $$
Hence we may apply Theorem~\ref{Shao} to conclude that
$$ \sum_{n \in P} \chi(n) = \sum_{n \in P'} \chi(n) \ll_{\eps} p^{-c}|P| $$
for  some constant $c = c(\eps) > 0$.
\end{proof}

\subsection{Uniform Distribution}

In the proof of Theorem~\ref{thm:mult}, we will need the Erd\"{o}s-Tur\'{a}n inequality that connects equidistribution with exponential sums; see~\cite[Corollary 1.1]{10lectures}.

\begin{theorem}[Erd\"{o}s-Tur\'{a}n inequality]\label{erdos-turan}
Let $u_1,u_2,\cdots,u_N$ be any sequence of points on the unit circle $\R/\Z$. For any positive integer $K$ and any $\alpha \leq \beta \leq \alpha+1$, we have
$$ \left|\#\{1 \leq n \leq N \colon u_n \in [\alpha,\beta]\pmod{1}\} - (\beta-\alpha)N\right|  \leq \frac{N}{K+1} + 3\sum_{k=1}^{K}\frac{1}{k}\left|\sum_{n=1}^Ne(ku_n)\right|.$$
\end{theorem}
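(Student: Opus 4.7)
The plan is to prove this by the standard Fourier-analytic discrepancy argument: approximate the indicator $\mathbf{1}_{[\alpha,\beta]}$ on $\R/\Z$ from above and below by trigonometric polynomials of degree at most $K$, and then expand those polynomials in Fourier series to bring in the exponential sums on the right-hand side.

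The key analytic input I would invoke is the Selberg--Beurling--Vaaler majorants and minorants: for every arc $[\alpha,\beta] \subset \R/\Z$ and every positive integer $K$, there exist trigonometric polynomials $S_K^\pm(x) = \sum_{|k| \leq K} \hat{S}_K^\pm(k)\, e(kx)$ of degree at most $K$ satisfying (i) the pointwise sandwich $S_K^-(x) \leq \mathbf{1}_{[\alpha,\beta]\,\Mod{1}}(x) \leq S_K^+(x)$, (ii) the mass bound $\int_0^1 (S_K^+ - S_K^-) \leq \frac{1}{K+1}$ (so in particular $\hat{S}_K^\pm(0)$ differs from $\beta - \alpha$ by at most $\frac{1}{K+1}$), and (iii) the coefficient decay $|\hat{S}_K^\pm(k)| \leq \frac{3}{2|k|}$ for $1 \leq |k| \leq K$. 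The constants $\frac{1}{K+1}$ and $3$ in the statement are tracking (ii) and (iii) respectively.

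With the majorants in hand, the rest is bookkeeping. Summing the sandwich in (i) over $n = 1,\ldots,N$ places the count in the statement between $\sum_n S_K^-(u_n)$ and $\sum_n S_K^+(u_n)$. Expanding $S_K^\pm$ in Fourier series and interchanging the order of summation, the zero mode contributes $\hat{S}_K^\pm(0)\, N$, which differs from $(\beta-\alpha)N$ by at most $\frac{N}{K+1}$ by (ii). The remaining contribution is
$$ \sum_{1 \leq |k| \leq K} \hat{S}_K^\pm(k) \sum_{n=1}^N e(k u_n), $$
which after the triangle inequality, property (iii), and pairing the $k$ and $-k$ terms (whose Weyl sums are complex conjugates and hence have equal modulus) is bounded by $3 \sum_{k=1}^K \frac{1}{k} |\sum_{n=1}^N e(k u_n)|$. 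Adding the two error bounds recovers exactly the statement.

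The main obstacle is the construction of the polynomials $S_K^\pm$ themselves with the sharp constants in (ii) and (iii); this ultimately rests on Beurling's extremal majorant of the signum function and Vaaler's elegant passage from that entire-function extremal problem to trigonometric polynomials via the Fej\'er kernel. Since the paper only uses the inequality as a black box, I would cite the construction (as the paper does with \cite{10lectures}) rather than reproduce the extremal analysis; granted the majorants, the reduction above is routine.
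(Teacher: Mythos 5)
Your outline is correct and is essentially the standard proof from the source the paper cites (Montgomery's \emph{Ten Lectures}, via the Selberg--Beurling--Vaaler majorants); the paper itself invokes the inequality as a black box without proof, so there is nothing to compare beyond the citation. One tiny quibble: for the Selberg polynomials one has $\int_0^1(S_K^+-S_K^-)=\frac{2}{K+1}$ rather than $\leq\frac{1}{K+1}$, but the property you actually use --- that each of $\hat{S}_K^{\pm}(0)$ differs from $\beta-\alpha$ by at most $\frac{1}{K+1}$ --- is exactly what they provide, so the argument goes through as written.
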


\begin{corollary}\label{erdos-turan-cor}
Let $I \subset \F_p$ be an interval of length $|I| = \delta p$, and let $a_1,a_2,\cdots,a_N$ be any sequence of points in $\F_p$, none of which lies in $I$. Then there exists a positive integer $k \leq 10\delta^{-1}$ such that
$$ \left|\sum_{n=1}^N e_p(ka_n) \right| \gg \delta^2N. $$
\end{corollary}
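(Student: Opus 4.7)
The plan is to apply the Erdős--Turán inequality (Theorem~\ref{erdos-turan}) to the sequence $u_n = a_n/p$ on $\R/\Z$. Since $I$ is an interval in $\F_p$ of length $\delta p$, its image under the map $m \mapsto m/p$ consists of $\delta p$ equally spaced points in $\R/\Z$, and we may choose $\alpha, \beta \in \R$ with $\beta - \alpha = \delta$ so that the arc $[\alpha, \beta] \pmod 1$ contains all points of the image of $I$ but avoids every other $k/p$ (a small $1/(2p)$ shift at each endpoint suffices). Since no $a_n$ lies in $I$, this forces $\#\{1 \leq n \leq N : u_n \in [\alpha, \beta] \pmod 1\} = 0$, while $(\beta - \alpha) N = \delta N$.

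Next I would choose the parameter $K$ in Theorem~\ref{erdos-turan} just large enough to absorb the trivial error: taking $K = \lceil 4 \delta^{-1} \rceil$ ensures $N/(K+1) \leq \delta N / 2$ and also $K \leq 10 \delta^{-1}$ (we may assume $\delta$ is smaller than some absolute constant, else the statement is trivial). Erdős--Turán then yields
$$ \delta N \leq \frac{\delta N}{2} + 3 \sum_{k=1}^{K} \frac{1}{k} \left| \sum_{n=1}^{N} e_p(k a_n) \right|, $$
so that
$$ \frac{\delta N}{6} \leq \sum_{k=1}^{K} \frac{1}{k} \left| \sum_{n=1}^{N} e_p(k a_n) \right|. $$

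Finally, bound the right-hand side by $M \cdot \sum_{k=1}^K 1/k$, where $M = \max_{1 \leq k \leq K} |\sum_n e_p(k a_n)|$. Using the crude bound $\sum_{k=1}^K 1/k \leq K \leq 10 \delta^{-1}$ gives $M \gg \delta^2 N$, establishing the corollary at the value of $k$ that achieves the maximum. (One could even obtain the stronger bound $M \gg \delta N / \log(\delta^{-1})$ via $\sum 1/k \ll \log K$, but the statement only requires $\delta^2 N$.) There is essentially no obstacle here: the whole argument is a direct application of Erdős--Turán, and the only point requiring minor care is the boundary adjustment in selecting the arc of length exactly $\delta$, which is harmless because $\delta \geq 1/p$.
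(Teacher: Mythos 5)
Your proof is correct and takes essentially the same route as the paper: apply Erd\H{o}s--Tur\'an to $u_n = a_n/p$ with the arc $p^{-1}I$, choose $K \asymp \delta^{-1}$ so the $N/(K+1)$ term is absorbed, and pigeonhole over the at most $10\delta^{-1}$ frequencies. The only difference is cosmetic: you treat the endpoint adjustment of the arc and the constant bookkeeping a bit more explicitly than the paper does.
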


\begin{proof}
We apply Theorem~\ref{erdos-turan} with the sequence of points $\{a_n/p\}_{1 \leq n \leq N}$ and $[\alpha,\beta] = p^{-1}I$ to obtain
$$ \frac{|I|}{p} N \leq \frac{N}{K+1} + 3\sum_{k=1}^K \frac{1}{k} \left|\sum_{n =1}^N e_p(ka_n)\right|
$$
for any positive integer $K$. Choosing $K = 10/\delta$, we conclude that
$$ \delta N \ll \sum_{k=1}^K \left|\sum_{n =1}^N e_p(ka_n)\right|. $$
The conclusion follows immediately.
\end{proof}

\section{Proof of Theorem~\ref{thm:F(p)}}

In this section we deduce Theorem~\ref{thm:F(p)} by showing that Hilbert cubes must contain large proper GAPs (Proposition~\ref{prop:cube-GAP}), and that large GAPs must contain primitive roots (Proposition~\ref{prop:char-sum-GAP}).

\begin{proposition}[Hilbert cubes contain large GAPs]\label{prop:cube-GAP}
For any positive integer $r$, there exist constants $C,c>0$ depending only on $r$ such that the following statement holds. Let $d$ be a positive integer and $p$ be prime. If  $d^{r+2} \geq Cp$, then any Hilbert cube of dimension $d$ in $\F_p$ either contains all of $\F_p$, or contains a proper progression of rank $r'$ and size at least $cd^{r'+1}$ for some $1 \leq r' \leq r$.
\end{proposition}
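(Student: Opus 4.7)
The plan is to reduce Proposition~\ref{prop:cube-GAP} to a direct application of the Szemer\'edi--Vu theorem (Theorem~\ref{thm:SV}). Set $A = \{a_1, \ldots, a_d\}$; since the $a_i$ are pairwise distinct by the definition of a Hilbert cube, we have $|A| = d$. The crucial observation is that for every integer $1 \leq \ell \leq d$, any sum $a_{i_1} + \cdots + a_{i_\ell}$ of $\ell$ distinct elements of $A$ arises as a subset sum (take $n_{i_j} = 1$ for $1 \leq j \leq \ell$ and all other $n_i = 0$), so that $a_0 + \ell^* A \subseteq H$.

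With this inclusion in hand, I would apply Theorem~\ref{thm:SV} with $\ell := \lfloor d/2 \rfloor$, which satisfies $\ell \leq |A|/2$. Writing $C_0, c_0$ for the constants in Theorem~\ref{thm:SV}, the hypothesis $\ell^{r+1}|A| \geq C_0\, p$ becomes $\lfloor d/2 \rfloor^{r+1} \cdot d \geq C_0\, p$, which is implied by our assumption $d^{r+2} \geq Cp$ once $C = C(r)$ is chosen large enough to absorb the factor $2^{r+1}$. Theorem~\ref{thm:SV} then yields that $\ell^* A$ either equals $\F_p$ or contains a proper GAP $P$ of rank $r'$ with $1 \leq r' \leq r$ and $|P| \geq c_0\, \ell^{r'} |A| \geq c\, d^{r'+1}$ for a suitable $c = c(r) > 0$.

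In the first case, $H \supseteq a_0 + \ell^* A = a_0 + \F_p = \F_p$, so $H = \F_p$. In the second case, the translate $a_0 + P$ is a proper GAP of the same rank $r'$ and the same size as $P$, and is contained in $H$, giving the desired conclusion. There is no substantive obstacle to this argument: the heavy lifting is done by Theorem~\ref{thm:SV}, which we use as a black box, and what remains is merely to track the relationship between $\ell$ and $d$ and to absorb implicit factors into the constants $C$ and $c$. If one wanted a self-contained proof, the main difficulty would of course be re-deriving the Szemer\'edi--Vu result itself, which is the deep ingredient that converts many distinct subset sums into genuine GAP structure.
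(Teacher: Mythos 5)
Your proposal is correct and is essentially identical to the paper's own proof: both apply Theorem~\ref{thm:SV} to $A = \{a_1,\ldots,a_d\}$ with $\ell = \lfloor d/2\rfloor$, absorb the resulting factors into the constants $C$ and $c$, and conclude via the containment of $\ell^* A$ in a translate of $H$. No issues.
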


\begin{proof}
 Let $H = \mathcal{H}(a_0; a_1,\cdots,a_d)$ be a Hilbert cube of dimension $d$, where $a_0,a_1,\cdots,a_d \in \F_p$ with $a_1,\cdots,a_d$ pairwise distinct. We will apply Theorem~\ref{thm:SV} with $A = \{a_1,\cdots,a_d\}$ and $\ell = \lfloor d/2\rfloor$. We have
$$ \ell^{r+1} |A| \gg_r d^{r+2} \geq Cp. $$
Hence the assumptions in Theorem~\ref{thm:SV} are satisfied provided that $C$ is large enough in terms of $r$. Thus we conclude $\ell^*A$ either contains all of $\F_p$ or contains a proper GAP of rank $r'$ and size at least $\gg_r \ell^{r'}|A| \gg_r d^{r'+1}$, for some $1 \leq r' \leq r$. The desired conclusion follows since $\ell^*A$ is contained in a translate of $H$.
\end{proof}

\begin{proposition}\label{prop:char-sum-GAP}
Let $P \subset \F_p$ be a proper GAP of rank $r$. If $P$ does not contain any primitive roots modulo $p$, then $|P| \ll_{\eps} p^{f(r)+\eps}$ for any $\eps > 0$, where
\[ f(r) = \begin{cases} 1/4 & \text{if }r=1, \\ 1/3 & \text{if }r=2, \\ 2/5 & \text{if }r \geq 3. \end{cases} \]
\end{proposition}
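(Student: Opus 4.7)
The plan is to detect primitive roots via multiplicative characters and reduce the statement to the character sum estimates already collected in Section~2. Starting from the Möbius-inversion identity (valid for $n \in \F_p^\times$)
$$ \mathbf{1}_{n \text{ is a primitive root}} = \frac{\phi(p-1)}{p-1} + \sum_{\substack{d \mid p-1 \\ d \geq 2}} \frac{\mu(d)}{d} \sum_{\substack{\chi^d = \chi_0 \\ \chi \neq \chi_0}} \chi(n), $$
summing over $n \in P$ gives that the number of primitive roots in $P$ equals a main term $\frac{\phi(p-1)}{p-1}|P \cap \F_p^\times|$ plus an error term consisting of non-trivial character sums $\sum_{n \in P} \chi(n)$. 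I will use the standard lower bound $\phi(p-1)/(p-1) \gg_\eps p^{-\eps}$ to ensure the main term is of size at least $p^{-\eps} |P|$ (up to a harmless issue if $0 \in P$).

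For the error term, I would apply the appropriate character sum estimate according to the rank of $P$. For $r = 1$, the GAP $P$ is a dilate $a_1 \cdot I$ of an interval $I \subset \F_p$; by multiplicativity, $\sum_{n \in P} \chi(n) = \chi(a_1) \sum_{m \in I} \chi(m)$, and the classical Burgess bound gives $\ll_\eps p^{-c} |P|$ whenever $|P| > p^{1/4 + \eps}$. For $r = 2$ the bound $\ll_\eps p^{-c}|P|$ follows from Corollary~\ref{Chang2} when $|P| > p^{1/3 + \eps}$, and for $r \geq 3$ from Theorem~\ref{Chang} when $|P| > p^{2/5 + \eps}$.

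Then I would sum over all characters contributing to the error. The characters with $\chi^d = \chi_0$ form a cyclic group of order $d$, so the coefficient $|\mu(d)|/d$ and the count of at most $d - 1$ non-trivial such characters contribute a factor bounded by $\tau(p-1) \ll_\eps p^\eps$ after summing over squarefree $d \mid p-1$. Combined with the uniform character sum bound, the total error is $\ll_\eps p^{2\eps - c}|P|$. Under the assumption that $P$ contains no primitive root, the main term must cancel the error term, yielding $p^{-\eps}|P| \ll p^{2\eps - c}|P|$, which is impossible once $\eps$ is chosen small relative to $c$. Therefore $|P|$ cannot exceed the threshold $p^{f(r)+\eps}$ that was required for the character sum estimates to apply.

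The main obstacle is largely cosmetic: one has to verify that summing $|\mu(d)|/d$ times the number of non-trivial characters of order $d$ over divisors $d \mid p-1$ produces only a $p^\eps$ loss rather than a $p^{o(1)}$ or worse loss, and one must absorb a possible contribution from $0 \in P$ and handle the minor bookkeeping distinguishing $|P|$ from $|P \cap \F_p^\times|$. The rank $r = 1$ case also requires the elementary observation that a proper rank-$1$ GAP is, up to multiplicative dilation, an interval, allowing the Burgess estimate to be invoked directly.
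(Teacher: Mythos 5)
Your proposal is correct and follows essentially the same route as the paper: the Vinogradov-type character-sum expansion of the primitive-root indicator, the divisor bound $\tau(p-1)\ll_\eps p^\eps$ to control the total weight of the non-trivial characters, and then Burgess ($r=1$, after dilating the rank-one GAP to an interval), Corollary~\ref{Chang2} ($r=2$), or Theorem~\ref{Chang} ($r\geq 3$) to bound each character sum. The only cosmetic difference is that the paper pigeonholes to extract a single character with a large sum before invoking these estimates, whereas you bound every non-trivial character sum uniformly and sum up; the two orderings are equivalent.
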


\begin{proof}
We may assume that $p$ is sufficiently large in terms of $\eps$, since otherwise the claim holds trivially.
By~\cite[Lemma 2.4]{DES13} we have for any $n \in \F_p$,
\[ \frac{\varphi(p-1)}{p-1} \sum_{\chi\pmod p} c_{\chi} \chi(n) = \begin{cases} 1 & \text{if }n\text{ is a primitive root}\pmod{p}, \\ 0  & \text{otherwise,} \end{cases} \]
where $c_{\chi} = \mu(\ord(\chi))/\varphi(\ord(\chi))$ and $\ord(\chi)$ denotes the order of $\chi$. Since $P$ does not contain any primitive roots, we have
\[ \sum_{\chi\pmod{p}} c_{\chi}  \sum_{n \in P} \chi(n) = 0. \]
Taking out the term $\chi = \chi_0$, we get
\[ \left|\sum_{\chi\neq\chi_0} c_{\chi} \sum_{n \in P} \chi(n)\right| \geq |P|-1. \]
Note that
$$ \sum_{\chi} |c_{\chi}| \leq  \sum_{d\mid p-1} \frac{1}{\varphi(d)} \#\{\chi\colon \ord(\chi)=d\} = \sum_{d\mid p-1} 1 \ll_{\tau} p^{\tau} $$
for any $\tau > 0$.
Thus for at least one non-trivial character $\chi \neq \chi_0$ we have
$$ \left| \sum_{a \in P} \chi(a) \right| \gg_{\tau} |P|p^{-\tau} $$
for any $\tau > 0$.

Suppose, for the purpose of contradiction, that $|P| \geq p^{f(r)+\eps}$.  By the Burgess estimate on character sums (in the case $r=1$),  Theorem~\ref{Chang} (in the case $r \geq 3$), and Corollary~\ref{Chang2} (in the case $r=2$), we have
$$ \left| \sum_{a \in P} \chi(a) \right| \ll_{\eps} |P|p^{-c}. $$
for some constant $c = c(\eps) > 0$.  This leads to a contradiction, choosing $\tau = c/2$ (say).
\end{proof}

We are now ready to deduce Theorem~\ref{thm:F(p)}.
We may assume that $p$ is sufficiently large in terms of $\eps$, since otherwise the claim holds trivially. Suppose, for the purpose of contradiction, that there is a Hilbert cube $H \subset \F_p$  of dimension $d > p^{1/8+\eps}$ not containing any primitive roots. By Proposition~\ref{prop:cube-GAP} applied with $r=10$ (say), $H$ contains a proper progression $P$ of rank $r'$ and size $|P| \gg d^{r'+1} \gg p^{(r'+1)/8+\eps}$, for some $1 \leq r' \leq 10$. On the other hand, since $P$ does not contain any primitive roots, Proposition~\ref{prop:char-sum-GAP} implies that $|P|\ll_{\eps} p^{f(r')+\eps/2}$. Combining the upper and lower bounds for $|P|$, we conclude that $(r'+1)/8 < f(r')$. This is a contradiction, no matter whether $r'=1$, $r'=2$, or $r' \geq 3$.

\section{Proof of Theorem~\ref{thm:mult}}

In this section we deduce Theorem~\ref{thm:mult} by combining Corollary~\ref{erdos-turan-cor} with certain (weighted) exponential sum estimates over Hilbert cubes.

We may assume that $p$ is sufficiently large, otherwise the claim holds trivially.
Let $H = \mathcal{H}^{\times}(a_0; a_1, \cdots, a_d)$ be a multiplicative Hilbert cube, where $a_0,a_1,\cdots,a_d \in \F_p^{\times}$ with $a_1,\cdots,a_d$ pairwise distinct. Suppose, for the purpose of contradiction, that $d \geq p^{\eps}$. Set $r = \lceil 2\eps^{-1}\rceil$, and form a partition
$$ \{a_1,\cdots,a_d\} = A_1 \cup \cdots \cup A_r $$
into subsets of almost-equal sizes, so that
$$ |A_i| = \frac{d}{r} + O(1) \gg_{\eps} p^{\eps} $$
for each $1 \leq i \leq r$. For $b \in \F_p^{\times}$, consider the exponential sum
$$ S_b := \sum_{x_1 \in A_1,\cdots,x_r \in A_r} e_p(bx_1\cdots x_r). $$
On the one hand, none of the points $a_0x_1\cdots x_r$ with $x_i \in A_i$ lies in the interval $I$, so Corollary~\ref{erdos-turan-cor} implies that there exists a positive integer $k \leq 10p^{\delta}$ such that
$$ \left| \sum_{x_1 \in A_1, \cdots, x_r \in A_r} e_p(ka_0x_1\cdots x_r) \right| \gg p^{-2\delta} |A_1|\cdots |A_r|. $$
On the other hand, since $|A_i| \gg_{\eps} p^{\eps}$ for each $i$ and $\prod_{1 \leq i \leq r}|A_i| \gg_{\eps} p^{r\eps} \gg p^2$, we may apply Bourgain's exponential sum bound~\cite[Theorem A]{Bourgain} to obtain
$$ \left| \sum_{x_1 \in A_1, \cdots, x_r \in A_r} e_p(ka_0x_1\cdots x_r) \right| < p^{-c} |A_1|\cdots |A_r| $$
for some constant $c = c(\eps) > 0$. This leads to a contradiction by choosing $\delta = c/4$.

\bibliographystyle{plain}
\bibliography{biblio}

\end{document}